\theoremstyle{definition}
\theoremstyle{remark}
\numberwithin{equation}{section}
\newtheorem{tm}{Theorem}[section]
\newtheorem{ap}{Assumption}[section]
\newtheorem{lm}{Lemma}[section]
\newtheorem{cor}{Corollary}[section]
\newtheorem{ex}{Example}[section]
\newcommand{\ee}{\mathbb E}
\newcommand{\pp}{\mathbb P}
\newcommand{\rr}{\mathbb R}
\newcommand{\BB}{\mathcal B}
\newcommand{\CC}{\mathcal C}
\newcommand{\LL}{\mathcal L}
\newcommand{\OOO}{\mathscr O}
\newcommand{\FFF}{\mathscr F}
\newcommand{\<}{\langle}
\renewcommand{\>}{\rangle}
\allowdisplaybreaks \allowdisplaybreaks[4]
\begin{document}


\title[Asymptotic Log-Harnack Inequality for Monotone SPDE]
{Asymptotic Log-Harnack Inequality for Monotone SPDE with Multiplicative Noise}

\author{Zhihui Liu}
\address{Department of Mathematics, 
The Hong Kong University of Science and Technology, Kowloon, Hong Kong}
\curraddr{}
\email{zhliu@ust.hk and liuzhihui@lsec.cc.ac.cn}

\subjclass[2010]{Primary 60H15; 60H10, 37H05}

\keywords{Asymptotic log-Harnack inequality, 
monotone stochastic partial differential equations,
asymptotic strong Feller, 
asymptotic irreducibility}

\date{\today}

\dedicatory{}

\begin{abstract}
We derive an asymptotic log-Harnack inequality for nonlinear monotone SPDE driven by possibly degenerate multiplicative noise.
Our main tool is the asymptotic coupling by the change of measure.
As an application, we show that, under certain monotone and coercive conditions on the coefficients, the corresponding Markov semigroup is asymptotically strong Feller, asymptotic irreducibility, and possesses a unique and thus ergodic invariant measure.
The results are applied to highly degenerate finite-dimensional or infinite-dimensional diffusion processes.
\end{abstract}

\maketitle



\section{Introduction}
\label{sec1}

The dimension-free Harnack-type inequality has been a very efficient tool to study diffusion semigroups in recent years.
The dimension-free power-Harnack inequality and log-Harnack inequality were introduced in \cite{Wan97(PTRF)} for elliptic diffusion semigroups on noncompact Riemannian manifolds and \cite{Wan10(JMPA)} for heat semigroups on manifolds with boundary. 
Both inequalities have been investigated extensively and applied to SDEs and SPDEs via coupling by the change of measures, see, e.g., \cite{ATW09(SPA), Liu09(JEE), MRW11(PA), Wan07(AOP), Wan11(AOP), Wan17(JFA), WZ14(SPA), WZ15(PA), Zha10(PA)}, the monograph \cite{Wan13}, and references therein.
In particular, these inequalities imply gradient estimates and thus the strong Feller property, irreducibility, and the uniqueness of invariant probability measures of the associated Markov semigroups.

When the stochastic system is degenerate so that these properties are unavailable, power- and log-Harnack inequalities could not hold. 
In this case, it is natural to investigate weaker versions of these properties by exploiting Harnack inequalities in the weak version. 
For instance, the strong Feller property is invalid for stochastic 2D Navier--Stokes equations driven by degenerate additive noises, whereas the weaker version, called the asymptotically strong Feller property, had been proved in \cite{HM06(ANN)} and \cite{Xu11(JEE)} by making use of asymptotic couplings and a modified log-Harnack inequality, respectively. 
This type of log-Harnack inequality in the weak version is of the form
\begin{align} \label{alh}
P_t \log f(x) & \le \log P_t f(y)+\Phi(x, y)
+ \Psi_t(x, y) \|\nabla \log f\|_\infty,  
\end{align}
for any $t>0$, $x,y$ in the underlying Hilbert space $H$, and $f \in \BB^+_b(H)$ with $\|\nabla \log f\|_\infty<\infty$, where $\Phi$ and $\Psi_t : H \times H \rightarrow (0, \infty)$ are measurable with $\Psi_t \downarrow 0$ as $t \uparrow \infty$.

Now it is called an asymptotic log-Harnack inequality in \cite{BWY19(SPA)}, where the authors proved that the Markov semigroup $P_t$ is asymptotic strong Feller, asymptotic irreducibility, and possesses at most one invariant probability measure, provided that \eqref{alh} holds.
They also gave some applications to stochastic systems with infinite memory driven by non-degenerate noises.
Recently, such inequality \eqref{alh} was also studied in
\cite{LLX19(IDA), HLL20(JEE), HLL20(SPL)} for stochastic 3D fractional Leray-$\alpha$ model, semilinear SPDEs with monotone coefficients, and stochastic 2D hydrodynamical systems, with degenerate noises, respectively.

To the best of our knowledge, there are few results concerning the Harnack inequality for nonlinear monotone SPDEs driven by possibly degenerate multiplicative noise in the literature.
We only aware of \cite{HLL20(SPL)} considered a semilinear monotone SPDE driven by degenerate multiplicative noise using the coupling method and the strong dissipativity of the unbounded linear operator.
This is the main motivation of the present study. 
We also note that Harnack inequalities for nonlinear monotone SPDEs, including stochastic $p$-Laplacian equation and stochastic generalized porous media equation, both driven by non-degenerate additive noise were obtained in \cite{Liu09(JEE), LW08(JMAA), Wan07(AOP)}.

We focus on the possibly fully nonlinear SPDE \eqref{eq-z} under monotone assumptions on the coefficients; see Section \ref{sec2} for more details.
Inspired by the coupling method developed
in \cite{BKS20(AAP), BWY19(SPA), Hai02(PTRF), HMS11(PTRF), KS18(PTRF), Oda08(PTRF)}, we prove the asymptotic log-Harnack inequality \eqref{alh} and applied it to derive asymptotic properties for the corresponding Markov semigroup.
Several finite-dimensional and infinite-dimensional SDEs driven by highly degenerate noise are given to illustrate our main results.
In this setup, the strong Feller property is generally invalid, so we are in a weak situation without standard Wang-type log-Harnack inequalities.

The rest of the paper is organized as follows. 
In Section \ref{sec2}, we give some preliminaries for the considered Eq. \eqref{eq-z}. 
We preform and prove the first main result, the desired asymptotic log-Harnack inequality \eqref{har} in Theorem \ref{tm-har}, for Eq. \ref{eq-z} with non-degenerate noise in Section \ref{sec3}.
The idea is then extended, in Section \ref{sec4}, to the highly non-degenerate noise case (see Eq. \eqref{eq-xy}), where we get another main result, the asymptotic log-Harnack inequality \eqref{har+} in Theorem \ref{tm-har+}.
Finally, in the last section, we give some concrete examples either of SODEs or SPDEs.

\section{Preliminaries}
\label{sec2}

Let $V \subset H=H^* \subset V^*$ be a Gelfand triple, i.e., $(H, (\cdot, \cdot), \|\cdot\|)$ is a separable Hilbert space identified with its dual space $H^*$ via the Riesz isomorphism, $V$ is a reflexive Banach space continuously and densely embedded into $H$, and $V^*$ is the dual space of $V$ with respect to $H$.
Denote by $\<\cdot, \cdot\>$ the dualization between $V$ and $V^*$, then it follows that $\<u, v\>=(u, v)$ for any $u \in V$ and $v \in H$. 

Denote by $\BB_b(H)$ the class of bounded measurable functions on $H$ and $\BB^+_b(H)$ the set of positive functions in $\BB_b(H)$.
For a function $f \in \BB_b(H)$, define
\begin{align*}
|\nabla f|(x)=\limsup_{y \rightarrow x} \frac{|f(z)-f(w)|}{\|z-w\|}, 
\quad x \in H.
\end{align*}
Denote by $\|\cdot\|_\infty$ the uniform norm:
$\|\nabla f\|_\infty=\sup_{x \in H} |\nabla f|(x)$ and define 
${\rm Lip}(H)=\{ f : H \rightarrow \rr, \ \|\nabla f\|_\infty<\infty\}$, the family of all Lipschitz functions on $H$.
Set ${\rm Lip}_b(H):={\rm Lip}(H) \cap \BB_b(H)$.

Let $U$ be another separable Hilbert space and 
$(\LL_2(U, H), \|\cdot\|_{\LL_2})$ be the space consisting of all Hilbert--Schmidt operators from $U$ to $H$.
Let $(W_t)_{t \ge 0}$ be a $U$-valued cylindrical Wiener process with respect to a complete filtered probability space $(\Omega, \FFF, (\FFF_t)_{t \ge 0},\pp)$, 
i.e., there exists an orthonormal basis $\{e_n\}_{n=1}^\infty$ of $U$ and a family of independent 1D Brownian motions $\{\beta_n\}_{n=1}^\infty $ such that  
\begin{align*}
W_t=\sum_{n=1}^\infty e_n\beta_n(t),\quad t \ge 0.
\end{align*}

Let us consider the stochastic equation
\begin{align}\label{eq-z}
{\rm d}Z_t=b(Z) {\rm d}t+\sigma (Z_t) {\rm d}W_t,
\quad Z_0=z \in H,
\end{align}
where $b: V \rightarrow V^*$, $\sigma: V \rightarrow \LL_2(U; H)$ are measurable and satisfy the following conditions.

\begin{ap} \label{ap} 
There exist constants $\alpha>1$, $\eta \in \rr$, and $C_i>0$ with $i=1,2,3,4$ such that for all $u, v, w \in V$,
\begin{align} 
& \rr \ni c \mapsto  \<b(u+c v), w\>
\quad \text{is continuous}, \label{ap-con}  \\
& 2 \<b(u)-b(v), u-v\>
+\|\sigma(u)-\sigma(v)\|^2_{\LL_2} \le \eta \|u-v\|^2, \label{ap-mon} \\
& 2 \<b(w), w\>
+\|\sigma(w)\|^2_{\LL_2} 
\le C_1+\eta \|w\|^2-C_2 \|w\|^\alpha, \label{ap-coe} \\
& \|b(w)\|_{V^*} \le C_3+C_4\|w\|^{\alpha-1}. \label{ap-gro}
\end{align}  
\end{ap}

To derive an asymptotic log-Harnack inequality for Eq. \eqref{eq-z}, we need the following standard non-degenerate condition.

\begin{ap}\label{ap-ell}
$\sigma: H \rightarrow \LL_2(U; H)$ is bounded 
and invertible with bounded right pseudo-inverse $\sigma^{-1}: H \rightarrow \LL(H; U)$, i.e., $\sigma(z)\sigma^{-1}(z)={\rm Id}_H$ (the identity operator on $H$) for all $z \in H$, with $\|\sigma^{-1}\|_\infty:=\sup_{z \in H} \|\sigma^{-1}(z)\|_{\LL(H; U)}<\infty$. 
\end{ap}

Under the above monotone and coercive conditions, one has the following known well-posedness result of Eq. \eqref{eq-z} and the Markov property of the solution, see, e.g., \cite[Theorems II.2.1, II.2.2]{KR79} and \cite[Theorem 4.2.4, Proposition 4.3.5]{LR15}.

\begin{lm} \label{lm-well} 
Let $T>0$ and Assumption \ref{ap} hold.
For any $\FFF_0$-measurable $z \in L^2(\Omega, H)$, Eq. \eqref{eq-z} with initial datum $Z_0=z$ exists a unique solution $\{Z_t:\ t\in [0,T]\}$ in $L^2(\Omega; \CC([0,T]; H)) \cap L^\alpha(\Omega \times (0, T); V)$ which  is a Markov process such that 
\begin{align}
(Z_t, v)=(z, v)+\int_0^t \<b(Z_r), v\> {\rm d}r+\int_0^t (v, \sigma (Z_r) {\rm d}W_r)
\end{align}
holds a.s. for all $v \in V$ and $t \in [0, T]$.
\end{lm}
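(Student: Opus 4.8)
The statement is the classical variational well-posedness theory for monotone SPDE, and the plan is to reproduce the Galerkin/monotonicity scheme of \cite{KR79} and \cite[Chapter 4]{LR15}, since the four hypotheses in Assumption \ref{ap} are exactly the hemicontinuity \eqref{ap-con}, monotonicity \eqref{ap-mon}, coercivity \eqref{ap-coe}, and growth \eqref{ap-gro} under which that scheme delivers a unique variational solution. First I would fix a basis $\{h_k\}_{k\ge1}\subset V$ that is orthonormal in $H$ and whose span is dense in $V$, set $H_n=\mathrm{span}\{h_1,\dots,h_n\}$, and let $P_n v^*=\sum_{k=1}^n\<v^*,h_k\>h_k$ be the associated projection onto $H_n$. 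Projecting \eqref{eq-z} gives the finite-dimensional It\^o system
\[
\mathrm dZ^n_t=P_n b(Z^n_t)\,\mathrm dt+P_n\sigma(Z^n_t)\,\mathrm dW_t,\qquad Z^n_0=P_n z,
\]
whose coefficients are, thanks to the continuity \eqref{ap-con} and the local consequence of \eqref{ap-mon}, continuous and locally monotone, so that each approximating system admits a unique strong solution $Z^n$ which does not explode in view of the coercivity \eqref{ap-coe}.

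The second step is a family of a priori bounds uniform in $n$. Applying the finite-dimensional It\^o formula to $\|Z^n_t\|^2$, inserting \eqref{ap-coe}, and combining the Burkholder--Davis--Gundy and Gronwall inequalities yields
\[
\sup_{n}\Big(\ee\sup_{t\in[0,T]}\|Z^n_t\|^2+\ee\int_0^T\|Z^n_r\|_V^\alpha\,\mathrm dr\Big)<\infty .
\]
The growth bound \eqref{ap-gro} then controls $b(Z^n)$ uniformly in $L^{\alpha/(\alpha-1)}(\Omega\times(0,T);V^*)$, while \eqref{ap-coe} controls $\sigma(Z^n)$ in $L^2(\Omega\times(0,T);\LL_2(U;H))$. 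By reflexivity I would extract a subsequence (not relabelled) along which $Z^n\rightharpoonup Z$ in $L^\alpha(\Omega\times(0,T);V)$, $b(Z^n)\rightharpoonup\bar b$ in $L^{\alpha/(\alpha-1)}(\Omega\times(0,T);V^*)$, and $\sigma(Z^n)\rightharpoonup\bar\sigma$ in $L^2(\Omega\times(0,T);\LL_2(U;H))$. Passing to the weak limit in the projected variational formulation shows that $Z$ satisfies \eqref{eq-z} with $b,\sigma$ replaced by the limits $\bar b,\bar\sigma$, and the It\^o formula in the Gelfand triple provides an $H$-continuous version of $Z$.

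The decisive point, and the step I expect to be the genuine obstacle, is to identify $\bar b=b(Z)$ and $\bar\sigma=\sigma(Z)$ via the monotonicity (Minty--Browder) trick. For every adapted $\phi\in L^\alpha(\Omega\times(0,T);V)$ the monotonicity \eqref{ap-mon} gives the pre-limit inequality
\[
\ee\int_0^T e^{-\eta r}\Big(2\<b(Z^n_r)-b(\phi_r),Z^n_r-\phi_r\>+\|\sigma(Z^n_r)-\sigma(\phi_r)\|^2_{\LL_2}-\eta\|Z^n_r-\phi_r\|^2\Big)\mathrm dr\le 0,
\]
in which the troublesome quadratic terms in $Z^n$ are rewritten, through the It\^o energy equality for $Z^n$, as the endpoint term $\ee(e^{-\eta T}\|Z^n_T\|^2)$. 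Taking $\liminf_n$ — using the weak lower semicontinuity of $w\mapsto\ee\|w\|^2$ on that endpoint term, the weak convergences on all remaining linear terms, and then the energy equality for the limit $Z$ — produces
\[
\ee\int_0^T e^{-\eta r}\Big(2\<\bar b_r-b(\phi_r),Z_r-\phi_r\>+\|\bar\sigma_r-\sigma(\phi_r)\|^2_{\LL_2}-\eta\|Z_r-\phi_r\|^2\Big)\mathrm dr\le0 .
\]
Taking $\phi=Z$ forces $\bar\sigma=\sigma(Z)$; discarding the nonnegative $\sigma$-term, then setting $\phi=Z-\varepsilon v\psi$ for $v\in V$ and bounded adapted $\psi$, dividing by $\varepsilon$ and letting $\varepsilon\downarrow0$ while invoking the hemicontinuity \eqref{ap-con} identifies $\bar b=b(Z)$. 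This establishes existence.

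Uniqueness follows directly from \eqref{ap-mon}: for two solutions $Z,\tilde Z$ with the same initial datum, the It\^o formula for $e^{-\eta t}\|Z_t-\tilde Z_t\|^2$ together with \eqref{ap-mon} gives $\ee(e^{-\eta t}\|Z_t-\tilde Z_t\|^2)\le0$, whence $Z=\tilde Z$ a.s.; pathwise uniqueness and $H$-continuity then yield the stated regularity $L^2(\Omega;\CC([0,T];H))\cap L^\alpha(\Omega\times(0,T);V)$. Finally, the Markov property is a consequence of uniqueness: the time-homogeneity of $b,\sigma$ and the independence of the increments of $W$ make the solution a Markov process in the usual way, exactly as in \cite[Proposition 4.3.5]{LR15}.
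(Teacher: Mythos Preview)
Your proposal is correct and faithfully reproduces the Galerkin--monotonicity scheme of \cite{KR79} and \cite[Chapter~4]{LR15}. The paper itself does not give a proof of Lemma~\ref{lm-well}; it simply invokes those references, so you have in fact supplied (a sketch of) the very argument the paper defers to.
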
 

 Denote by $(P_t)_{t \ge 0}$ the corresponding Markov semigroup, i.e.,  
\begin{align} \label{pt}
P_t f(z)=\ee [ f(Z^z_t) ], \quad 
t \ge 0, \ z \in H, \ f \in \BB_b(H).
\end{align}

\section{Asymptotic Log-Harnack Inequality}
\label{sec3}

Our main aim in this section is to derive an asymptotic log-Harnack inequality under Assumptions \ref{ap} and \ref{ap-ell}, and then use it to derive several asymptotic properties for $P_t$ defined in \eqref{pt}.

\subsection{Asymptotic Log-Harnack Inequality}

We first give the construction of an asymptotic coupling by the change of measure.

Let $\lambda>\eta/2$ be a constant, where $\eta$ appears in Eq. \eqref{ap-mon} and Eq. \eqref{ap-coe}.
Consider
\begin{align} \label{eq-cou}  
d \bar Z_t &=(b(\bar Z_t)
+\lambda \sigma(\bar Z_t) \sigma^{-1}(Z_t) (Z_t-\bar Z_t) ) {\rm d}t
+\sigma(\bar Z_t) {\rm d}W_t,  
\end{align}
with initial datum $\bar Z_0=\bar z \in H$. 
Under Assumptions \ref{ap}-\ref{ap-ell}, it is not difficult to check that the additional drift term $\lambda \sigma(\bar Z_t) \sigma^{-1}(Z_t) (Z_t-\bar Z_t)$ satisfies the hemicontinuity, locally monotonicity, coercivity, and growth conditions in \cite{LR10(JFA)}, thus the asymptotic coupling between $Z$ and $\bar Z$ is well-defined.

Our first aim is to examine that $\bar Z(t)$ has the same transition semigroup $P_t$ under another probability measure.
To this end, we set  
\begin{align} \label{v} 
v_t:=\lambda \sigma^{-1}(Z_t) (Z_t-\bar Z_t), \quad 
\widetilde{W}(t):=W_t +\int_0^t v_r {\rm d}r,
\end{align}
and define
\begin{align} \label{R}
R(t): & =\exp\Big( -\int_0^t \<v_r, {\rm d}W_r\>_U
-\frac12 \int_0^t \|v_r\|^2_U {\rm d}r\Big), \quad t \ge 0.
\end{align} 

We first check that $R$ defined by \eqref{R} is a local uniformly integrable martingale such that the following estimate \eqref{est-R} holds.

\begin{lm} \label{lm-R}
Under Assumption \ref{ap}, there exists a constant $\gamma=2 \lambda-\eta$ such that for any $T>0$,
\begin{align} \label{est-R} 
\sup_{t \in [0, T]} \ee[R(t) \log R(t)]  
\le \frac{\lambda^2 \|\sigma^{-1} \|_\infty^2}{2 \gamma} \|z-\bar z\|^2.
\end{align} 
Consequently, there exists a unique probability measure $\mathbb Q$ on 
$(\Omega, \FFF_\infty)$ such that
\begin{align} \label{Q} 
\frac{{\rm d}\mathbb Q|\FFF_t}{{\rm d} \pp|\FFF_t}=R(t), \quad t \ge 0.
\end{align} 
Moreover, $(\widetilde W_t)_{t \ge 0}$ is a $U$-valued cylindrical Wiener process under $\mathbb Q$.
\end{lm}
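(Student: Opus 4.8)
The plan is to carry out the entropy computation under the target measure $\mathbb{Q}$, where the multiplicative coupling decouples exactly, after a localization that makes the change of measure legitimate. Everything rests on the pointwise bound
\begin{align*}
\|v_r\|_U^2 = \lambda^2 \|\sigma^{-1}(Z_r)(Z_r-\bar Z_r)\|_U^2 \le \lambda^2 \|\sigma^{-1}\|_\infty^2 \|Z_r-\bar Z_r\|^2,
\end{align*}
which reduces \eqref{est-R} to controlling the $\mathbb{Q}$-expectation of $\int_0^t \|Z_r-\bar Z_r\|^2\,{\rm d}r$.

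First I would note that $R$, being the stochastic exponential of the continuous local martingale $-\int_0^\cdot \langle v_r,{\rm d}W_r\rangle_U$, is automatically a nonnegative local martingale, hence a supermartingale. To upgrade it to a uniformly integrable martingale and to justify Girsanov's theorem, I would introduce the stopping times $\tau_n=\inf\{t\ge 0: \int_0^t \|v_r\|_U^2\,{\rm d}r\ge n\}$, which increase to $\infty$ a.s.\ since $Z,\bar Z\in \CC([0,T];H)$ a.s.\ by Lemma \ref{lm-well} and the well-posedness of the coupling \eqref{eq-cou}. On each interval $[0,\tau_n]$ the stopped process $R(\cdot\wedge\tau_n)$ has bounded quadratic variation, so it is a true martingale and defines a probability measure $\mathbb{Q}_n$ via ${\rm d}\mathbb{Q}_n|\FFF_t/{\rm d}\pp|\FFF_t=R(t\wedge\tau_n)$; by Girsanov's theorem $\widetilde W$ is a cylindrical Wiener process under $\mathbb{Q}_n$ up to $\tau_n$.

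The heart of the argument is the identity $\ee[R(t\wedge\tau_n)\log R(t\wedge\tau_n)]=\ee_{\mathbb{Q}_n}[\log R(t\wedge\tau_n)]$. Substituting ${\rm d}W_r={\rm d}\widetilde W_r-v_r\,{\rm d}r$ into \eqref{R} turns $\log R$ into $-\int_0^{t\wedge\tau_n}\langle v_r,{\rm d}\widetilde W_r\rangle_U+\frac12\int_0^{t\wedge\tau_n}\|v_r\|_U^2\,{\rm d}r$, and taking $\ee_{\mathbb{Q}_n}$ annihilates the $\widetilde W$-martingale (its quadratic variation is bounded by $n$), leaving $\frac12\ee_{\mathbb{Q}_n}\int_0^{t\wedge\tau_n}\|v_r\|_U^2\,{\rm d}r$. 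The crucial gain is that under $\mathbb{Q}_n$ the process $Z$ solves ${\rm d}Z_t=(b(Z_t)-\lambda(Z_t-\bar Z_t))\,{\rm d}t+\sigma(Z_t)\,{\rm d}\widetilde W_t$, because $\sigma(Z_t)\sigma^{-1}(Z_t)={\rm Id}_H$ \emph{exactly}---this is precisely why the pseudo-inverse in \eqref{eq-cou} is evaluated along $Z_t$---while $\bar Z$ solves ${\rm d}\bar Z_t=b(\bar Z_t)\,{\rm d}t+\sigma(\bar Z_t)\,{\rm d}\widetilde W_t$. Applying It\^o's formula to $\|Z_t-\bar Z_t\|^2$ and using the monotonicity bound \eqref{ap-mon} together with the $-2\lambda\|Z_t-\bar Z_t\|^2$ produced by the coupling drift gives
\begin{align*}
\frac{{\rm d}}{{\rm d}t}\,\ee_{\mathbb{Q}_n}\|Z_t-\bar Z_t\|^2\le -\gamma\,\ee_{\mathbb{Q}_n}\|Z_t-\bar Z_t\|^2, \qquad \gamma=2\lambda-\eta>0,
\end{align*}
so that $\ee_{\mathbb{Q}_n}\|Z_r-\bar Z_r\|^2\le e^{-\gamma r}\|z-\bar z\|^2$; integrating in $r$, bounding $\int_0^t e^{-\gamma r}\,{\rm d}r\le 1/\gamma$, and inserting the pointwise bound on $\|v_r\|_U^2$ yields $\ee[R(t\wedge\tau_n)\log R(t\wedge\tau_n)]\le \frac{\lambda^2\|\sigma^{-1}\|_\infty^2}{2\gamma}\|z-\bar z\|^2$ uniformly in $n$ and $t$.

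Finally I would let $n\to\infty$. Since $x\mapsto x\log x$ is superlinear, the uniform entropy bound and the de la Vall\'ee--Poussin criterion show that $\{R(t\wedge\tau_n)\}_n$ is uniformly integrable; as $R(t\wedge\tau_n)\to R(t)$ a.s., it also converges in $L^1$, giving $\ee[R(t)]=1$, so $R$ is a genuine martingale, and \eqref{est-R} follows by lower semicontinuity of the entropy. The martingale property yields $\ee[R(t)\mid\FFF_s]=R(s)$, so the consistent family in \eqref{Q} extends to a unique probability measure $\mathbb{Q}$ on $(\Omega,\FFF_\infty)$, and one last application of Girsanov's theorem makes $\widetilde W$ a $\mathbb{Q}$-cylindrical Wiener process. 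The main obstacle is exactly the circularity that the clean computation requires the very change of measure whose validity---the martingale property of $R$---it is meant to establish; the localization by $\tau_n$ is what breaks this loop.
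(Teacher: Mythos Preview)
Your proposal is correct and follows essentially the same route as the paper: localize, apply Girsanov, rewrite \eqref{eq-z} and \eqref{eq-cou} in terms of $\widetilde W$, apply It\^o's formula plus \eqref{ap-mon} to get the exponential decay of $\ee_{\mathbb Q_n}\|Z_t-\bar Z_t\|^2$, deduce the uniform entropy bound, and pass to the limit. The only differences are cosmetic: the paper localizes with $\tau_n=\inf\{t:\|Z_t\|\ge n\}$ and invokes Novikov, whereas you stop when $\int_0^t\|v_r\|_U^2\,{\rm d}r$ hits level $n$; and the paper appeals to ``dominated convergence'' and Fatou to pass to the limit, while your de~la~Vall\'ee--Poussin argument from the uniform $x\log x$ bound is what actually justifies that step.
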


\begin{proof}
Let $T>0$ be fixed.
For any $n$ with $\|z\|<n$, define the stopping time 
\begin{align*} 
\tau_n=\inf \{ t \ge 0: \|Z_t\| \ge n\}.
\end{align*}  
Due to the non-explosion of Eq. \eqref{eq-z} and Eq. \eqref{eq-cou}, it is clear that $\tau_n \uparrow \infty$ as $n \uparrow \infty$ and $(Z_t)_{t \in [0, T \wedge \tau_n]}$ and $(\bar Z_t)_{t \in [0, T \wedge \tau_n]}$ are both bounded.
It follows from Assumption \ref{ap-ell} that Novikov condition holds on $[0, T \wedge \tau_n]$, i.e., 
\begin{align*} 
\ee \exp \Big(\frac12 \int_0^{T \wedge \tau_n} \|v_t\|^2_U {\rm d}t \Big) 
<\infty.
\end{align*}
According to Girsanov theorem, $({\widetilde W}_t)_{t \in [0,T \wedge \tau_n]}$ is a $U$-valued cylindrical Wiener Process under the probability measure ${\mathbb Q}_{T,n} := R(T \wedge \tau_n) \pp$.

By the construction \eqref{v}, we can rewrite Eq. \eqref{eq-z} and Eq. \eqref{eq-cou} on $T \wedge \tau_n$ as  
\begin{align} \label{eq-cou-rew}  
\begin{split}
d Z_t &=(b(Z_t)-\lambda (Z_t-\bar Z_t) ) {\rm d}t
+\sigma(Z_t) {\rm d}{\widetilde W}_t,  \quad t \le T \wedge \tau_n, \\
d \bar Z_t &=b(\bar Z_t) {\rm d}t +\sigma(\bar Z_t) {\rm d}{\widetilde W}_t,  \quad t \le T \wedge \tau_n,
\end{split}
\end{align}
with initial values $Z_0=z$ and $\bar Z_0=\bar z$, respectively.
Applying It\^o formula on $[0, T \wedge \tau_n]$, under the probability $\mathbb Q_{T,n}$, we have
\begin{align*} 
& {\rm d}\|Z_t-\bar Z_t\|^2 
=2 \<Z_t-\bar Z_t, (\sigma(Z_t)-\sigma(\bar Z_t) ) {\rm d}{\widetilde W}_t\> \\
& +\int_0^t [\|\sigma(Z_t)-\sigma(\bar Z_t)\|^2_{HS} 
+2\<Z_t-\bar Z_t, b(Z_t)-b(\bar Z_t)
-\lambda (Z_t-\bar Z_t) \> {\rm d}t.
\end{align*}
Taking expectations $\ee_{\mathbb Q_{T,n}}$ on the above equation and using the fact that $({\widetilde W}_t)_{t \in [0,T \wedge \tau_n]}$ is a cylindrical Wiener Process under ${\mathbb Q}_{T,n}$, we obtain 
\begin{align*} 
\ee_{\mathbb Q_{T,n}} \|Z_t-\bar Z_t\|^2  
\le \|z-\bar z\|^2 -(2 \lambda-\eta) \int_0^t \ee_{\mathbb Q_{T,n}}  \|Z_r-\bar Z_r\|^2 {\rm d}r, 
\end{align*} 
where we have used \eqref{ap-mon}.
The Gr\"onwall inequality leads to 
\begin{align} 
\ee_{\mathbb Q_{T,n}} \|Z_t-\bar Z_t\|^2 
\le e^{-\gamma t} \|z-\bar z\|^2, \quad  0 \le t \le T \wedge \tau_n,
\end{align}
with $\gamma=2 \lambda-\eta$.
It follows from the above estimate, Assumption \ref{ap-ell}, and Fubini theorem that 
\begin{align} \label{est-Rn}
& \sup_{t \in [0, T], n \ge 0} \ee[R(t \wedge \tau_n) \log R(t \wedge \tau_n)]  \nonumber \\
& =\sup_{t \in [0, T], n \ge 0} \ee_{\mathbb Q_{T,n}} [\log R(t \wedge \tau_n)]  \nonumber \\
& =\frac12 \sup_{n \ge 0} \int_0^{T \wedge \tau_n} \ee_{\mathbb Q_{T,n}} \|v_r\|^2_U {\rm d}r 
\le \frac{\lambda^2 \|\sigma^{-1} \|_\infty^2}{2 \gamma} \|z-\bar z\|^2.
\end{align}  

Let $0 \le s<t \le T$.
Using the dominated convergence theorem and the martingale property of
$(R(t \wedge \tau_n))_{t \in [0, T]}$, we have
\begin{align*} 
\ee[R(t)|\FFF_s]
=\ee[\lim_{n \rightarrow \infty} R(t \wedge \tau_n)|\FFF_s]
=R(s).
\end{align*}  
This shows that $(R(t))_{t \in [0, T]}$ is a martingale and thus 
${\mathbb Q}_T(A)={\mathbb Q}_{T,n}(A)$ for all $A \in \FFF_{T \wedge \tau_n}$, where ${\mathbb Q}_T := R(T) \pp$.
By Girsanov theorem, for any $T > 0$, $(\widetilde W_t)_{t \in [0, T]}$ is a cylindrical Wiener process under the probability measure ${\mathbb Q}_T $.  
By Fatou lemma,
\begin{align*} 
& \liminf_{n \rightarrow \infty} \ee_{\mathbb Q_{T,n}} [\log R(t \wedge \tau_n)] \\
& =\frac12 \liminf_{n \rightarrow \infty} \ee_{\mathbb Q_T}\int_0^{t \wedge \tau_n}  \|v_r\|^2_U {\rm d}r 
\ge \frac12 \ee_{\mathbb Q_T} \int_0^t \|v_r\|^2_U {\rm d}r,
\end{align*}  
and thus we get
\begin{align*}
& \sup_{t \in [0, T]} \ee[R(t) \log R(t)] 
=\sup_{t \in [0, T]} \ee_{\mathbb Q_T} [\log R(t)] \\
& = \frac12 \ee_{\mathbb Q_T} \int_0^T \|v_r\|^2_U {\rm d}r 
\le \liminf_{n \rightarrow \infty} \ee_{\mathbb Q_{T,n}} [\log R(t \wedge \tau_n)].
\end{align*} 
Then \eqref{est-R} follows from the estimate \eqref{est-Rn}. 

Finally, by the martingale property of $R$, the family $({\mathbb Q}_T)_{T>0}$ is harmonic.
By Kolmogorov harmonic theorem, there exists a unique probability measure ${\mathbb Q}$ on
$(\Omega, \FFF_\infty)$ such that \eqref{Q} holds, and thus $(\widetilde W_t)_{t \ge 0}$ is a U-valued cylindrical Wiener process under $\mathbb Q$.  
\end{proof}

Next, we show that $\|Z^z_t-\bar Z^{\bar z}_t\|$ decays exponentially fast as $t \rightarrow \infty$ in the $L^2(\Omega, \mathbb Q; H)$-norm sense, where $Z_\cdot^z$ and $\bar Z^{\bar z}_\cdot$ denote the solutions of Eq. \eqref{eq-z} with $Z_0=z$ and Eq. \eqref{eq-cou} with $\bar Z=\bar z$, respectively.

\begin{cor} \label{cor-x-y}
Under Assumption \ref{ap}, 
\begin{align} \label{x-y} 
\ee_{\mathbb Q} \|Z^z_t-\bar Z^{\bar z}_t\|^2 
\le e^{-\gamma t} \|z-\bar z\|^2,
\quad t \ge 0.
\end{align} 
\end{cor}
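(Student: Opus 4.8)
The plan is to upgrade the estimate already obtained inside the proof of Lemma \ref{lm-R} from the localized measures $\mathbb{Q}_{T,n}$ to the limiting measure $\mathbb{Q}$; this is essentially a passage-to-the-limit argument rather than a new computation. Recall that the Gr\"onwall step there yields, for the stopped processes,
\begin{align*}
\ee_{\mathbb Q_{T,n}} \|Z_{t \wedge \tau_n}-\bar Z_{t \wedge \tau_n}\|^2 \le e^{-\gamma t} \|z-\bar z\|^2, \quad 0 \le t \le T,
\end{align*}
where $\gamma = 2\lambda - \eta$ and $\tau_n$ is the exit time of $Z$ from the ball of radius $n$. So the content of the corollary is already available under each $\mathbb{Q}_{T,n}$; what remains is to remove the localization and the dependence on $T$.

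First I would fix $T \ge t$ and observe that $\|Z_{t \wedge \tau_n}-\bar Z_{t \wedge \tau_n}\|^2$ is $\FFF_{T \wedge \tau_n}$-measurable. Since the proof of Lemma \ref{lm-R} establishes that $\mathbb{Q}_T$ and $\mathbb{Q}_{T,n}$ coincide on $\FFF_{T \wedge \tau_n}$, I may replace $\ee_{\mathbb Q_{T,n}}$ by $\ee_{\mathbb Q_T}$ in the displayed bound without changing its value, obtaining
\begin{align*}
\ee_{\mathbb Q_T} \|Z_{t \wedge \tau_n}-\bar Z_{t \wedge \tau_n}\|^2 \le e^{-\gamma t} \|z-\bar z\|^2.
\end{align*}

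Next I would let $n \to \infty$. Since $\tau_n \uparrow \infty$ and the paths of $Z$ and $\bar Z$ are continuous in $H$ by Lemma \ref{lm-well}, we have $t \wedge \tau_n \to t$ and hence $\|Z_{t \wedge \tau_n}-\bar Z_{t \wedge \tau_n}\|^2 \to \|Z_t-\bar Z_t\|^2$ almost surely; because $\mathbb{Q}_T$ is equivalent to $\pp$ on $\FFF_T$, this convergence also holds $\mathbb{Q}_T$-a.s. Applying Fatou's lemma under $\mathbb{Q}_T$ then gives $\ee_{\mathbb Q_T}\|Z_t-\bar Z_t\|^2 \le e^{-\gamma t}\|z-\bar z\|^2$. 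Finally, since $\mathbb{Q}$ restricts to $\mathbb{Q}_T$ on $\FFF_T$ by \eqref{Q} and the integrand is $\FFF_t \subset \FFF_T$-measurable, the expectation under $\mathbb{Q}$ equals that under $\mathbb{Q}_T$, so \eqref{x-y} follows; as $T \ge t$ was arbitrary it holds for all $t \ge 0$.

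The main obstacle is making the interchange of limit and expectation rigorous across the change of measure: one must verify that the a.s.\ bound is proved for the \emph{stopped} variables against the correct localized measure, that the two measures genuinely agree on the relevant stopped $\sigma$-algebra $\FFF_{T \wedge \tau_n}$, and that the liminf direction of Fatou suffices, which it does since we only seek an upper bound on the limit. No uniform integrability of $\|Z_t-\bar Z_t\|^2$ is required.
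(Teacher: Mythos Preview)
Your argument is correct. The paper takes a slightly different, more direct route: once Lemma~\ref{lm-R} has established that $(\widetilde W_t)_{t\ge 0}$ is a cylindrical Wiener process under $\mathbb Q$, the paper simply re-runs the It\^o formula and Gr\"onwall argument for $\|Z_t-\bar Z_t\|^2$ directly under $\mathbb Q$, rather than extracting the localized estimate from inside the proof of Lemma~\ref{lm-R} and passing to the limit. In substance the two arguments coincide, since making the paper's ``apply It\^o under $\mathbb Q$'' step rigorous would itself require the same stopping-time localization and Fatou passage you spell out. What your version buys is that it recycles the computation already done and is explicit about the localization; what the paper's version buys is that it depends only on the \emph{statement} of Lemma~\ref{lm-R} (namely, that $\widetilde W$ is a $\mathbb Q$-Wiener process) rather than on an intermediate inequality buried in its proof.
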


\begin{proof}
Let $t \ge 0$ be fixed.
Similarly to the proof in Lemma \ref{lm-R}, 
applying It\^o formula under the probability measure $\mathbb Q$ and using the condition \ref{ap-mon}, we obtain 
\begin{align*} 
\ee_{\mathbb Q} \|Z^z_t-\bar Z^{\bar z}_t\|^2  
\le \|z-\bar z\|^2 - \gamma \int_0^t \ee_{\mathbb Q}  \|Z^z_r-\bar Z^{\bar z}_r\|^2 {\rm d}r, 
\end{align*} 
with $\gamma=2 \lambda-\eta$, from which we conclude \eqref{x-y} by Gr\"onwall lemma.
 \end{proof}

From Lemma \ref{lm-R}, it is clear that $R$ can be also rewriten as
\begin{align} \label{R+}
R(t)=\exp\Big( -\int_0^t \<v_r, {\rm d}\widetilde{W}(r)\>_U
+\frac12 \int_0^t \|v_r\|^2_U {\rm d}r\Big), \quad t \ge 0,
\end{align} 
and is a uniformly integrable martingale under the probability measure $\mathbb Q$. 
Moreover, by the well-posedness of the asymptotic coupling \eqref{eq-cou}, or equivalently, Eq. \eqref{eq-cou-rew}, $\bar Z(t)$ has the same transition semigroup $P_t$ under the probability measure $\mathbb Q$ defined by \eqref{Q}:
\begin{align} \label{pt-y}
P_t f(\bar z)=\ee_{\mathbb Q} [f(\bar Z_t^{\bar z})],\quad 
t \ge 0, \ \bar z \in H, \ f \in \BB_b(H).
\end{align} 

Now we can give a proof of the asymptotic log-Harnack inequality \eqref{har} for Eq. \eqref{eq-z}.
Using the results in \cite[Theorem 2.1]{BWY19(SPA)}, we have the following asymptotic properties implied by this inequality.

\begin{tm} \label{tm-har}
Let Assumptions \ref{ap} and \ref{ap-ell} hold.
For any $t \ge 0$, $z, \bar z \in H$, and $f \in \BB^+_b(H)$ with 
$\|\nabla \log f\|_\infty<\infty$,
\begin{align}\label{har}
P_t \log f(z) & \le \log P_t f(\bar z)
+\frac{\lambda^2 \|\sigma^{-1} \|_\infty^2}{2 \gamma} \|z-\bar z\|^2 \nonumber \\
& \quad + e^{-\frac{\gamma t}2}\|\nabla \log f\|_\infty \|z-\bar z\|.
\end{align}
Consequently, the following asymptotic properties hold.
\begin{enumerate}
\item (Gradient estimate)
For any $t>0$ and $f \in {\rm Lip}_b(H)$, there exist a constant $C=\gamma^{-\frac12}\lambda \|\sigma^{-1}\|_\infty$ such that 
\begin{align}\label{est-gra}
\|D P_t f\|^2 \le C \sqrt{P_t f^2-(P_t f)^2}
+ e^{-\frac{\gamma t}2}\|\nabla f\|_\infty.
\end{align}
Consequently, $P_t$ is asymptotically strong Feller.

\item (Asymptotic irreducibility) 
Let $z \in H$ and $B \subset H$ be a measurable set such that
$\delta(z, B):=\liminf_{t \rightarrow \infty} P_t(z, B)>0$.
Then $\liminf_{t \rightarrow \infty} P_t(z, B_\epsilon)>0$ for all $z \in H$ and $\epsilon>0$, 
where $B_\epsilon:=\{z \in H; \inf_{w \in B} \|z-w\|<\epsilon\}$.
Moreover, for any $\epsilon_0 \in (0, \delta(z, B))$, there exists a constant $t_0>0$ such that
$P_t(w, B_\epsilon)>0$ provided $t \ge t_0$ such that
$e^{-\gamma t/2} \|z-w\|< \epsilon \epsilon_0$. 

\item (Ergodicity)
If $\eta<0$, 
then there exists a unique and thus ergodic invariant probability measure $\mu$ for $P_t$.
\end{enumerate}
\end{tm}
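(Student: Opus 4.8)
The plan is to glue together the three facts already established---that $\bar Z^{\bar z}$ carries the semigroup $P_t$ under $\mathbb Q$ (see \eqref{pt-y}), that $Z^z$ and $\bar Z^{\bar z}$ contract exponentially in $L^2(\mathbb Q)$ (Corollary \ref{cor-x-y}), and that the relative entropy of $\mathbb Q$ with respect to $\pp$ is controlled (Lemma \ref{lm-R})---by means of the Donsker--Varadhan variational inequality. Throughout, $Z^z$ solves \eqref{eq-z} from $z$, so that $\ee[g(Z^z_t)]=P_tg(z)$ under $\pp$, whereas $\bar Z^{\bar z}$ solves the coupling \eqref{eq-cou} and, by \eqref{pt-y}, satisfies $\ee_{\mathbb Q}[g(\bar Z^{\bar z}_t)]=P_tg(\bar z)$.

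First I would record the entropy inequality: since $R(t)=\tfrac{{\rm d}\mathbb Q|\FFF_t}{{\rm d}\pp|\FFF_t}$, for every $\FFF_t$-measurable $h$ with $\ee[e^h]<\infty$ one has $\ee_{\mathbb Q}[h]\le \log\ee[e^h]+\ee[R(t)\log R(t)]$. Choosing $h=\log f(Z^z_t)$, whose exponential is the bounded function $f(Z^z_t)$, gives $\ee[e^h]=P_tf(z)$, so combined with Lemma \ref{lm-R} this yields
\begin{align*}
\ee_{\mathbb Q}[\log f(Z^z_t)]\le \log P_tf(z)+\frac{\lambda^2\|\sigma^{-1}\|_\infty^2}{2\gamma}\|z-\bar z\|^2.
\end{align*}
Next I would move the left-hand side from $Z^z$ to $\bar Z^{\bar z}$: because $\|\nabla\log f\|_\infty<\infty$ the map $\log f$ is Lipschitz, so by Cauchy--Schwarz and Corollary \ref{cor-x-y},
\begin{align*}
\ee_{\mathbb Q}[\log f(\bar Z^{\bar z}_t)]-\ee_{\mathbb Q}[\log f(Z^z_t)]\le \|\nabla\log f\|_\infty\big(\ee_{\mathbb Q}\|Z^z_t-\bar Z^{\bar z}_t\|^2\big)^{1/2}\le e^{-\frac{\gamma t}2}\|\nabla\log f\|_\infty\|z-\bar z\|.
\end{align*}
Since $\ee_{\mathbb Q}[\log f(\bar Z^{\bar z}_t)]=P_t\log f(\bar z)$ by \eqref{pt-y}, adding the last two displays bounds $P_t\log f(\bar z)$ by the asserted right-hand side with the roles of $z$ and $\bar z$ interchanged; as $z,\bar z\in H$ are arbitrary and both remainder terms are symmetric in $(z,\bar z)$, renaming the base points produces exactly \eqref{har}.

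For the three consequences I would appeal to \cite[Theorem 2.1]{BWY19(SPA)} with $\Phi(z,\bar z)=\tfrac{\lambda^2\|\sigma^{-1}\|_\infty^2}{2\gamma}\|z-\bar z\|^2$ and $\Psi_t(z,\bar z)=e^{-\gamma t/2}\|z-\bar z\|\downarrow0$: this yields the gradient estimate \eqref{est-gra} (whence the asymptotic strong Feller property), the asymptotic irreducibility, and the uniqueness of the invariant measure. When $\eta<0$, existence of an invariant measure is obtained separately by a Krylov--Bogoliubov argument, using \eqref{ap-coe} together with It\^o's formula and Gr\"onwall's lemma to derive the uniform bound $\sup_{t\ge0}\ee\|Z^z_t\|^2<\infty$ and the attendant $V$-moment control from the $-C_2\|w\|^\alpha$ term; uniqueness then forces ergodicity.

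The main obstacle is the direction of the relative entropy. Only ${\rm Ent}(\mathbb Q|\pp)=\ee[R\log R]$ is controlled, because the contraction of $\|Z_t-\bar Z_t\|$ holds under $\mathbb Q$ but not under $\pp$ (under $\pp$ the cross term involves $\sigma(\bar Z)\sigma^{-1}(Z)\neq{\rm Id}$ and need not be dissipative); consequently the variational inequality can only be applied in the form that bounds a $\mathbb Q$-expectation by a $\pp$-integral, which is precisely why the clean argument delivers $P_t\log f(\bar z)$ on the left and why the final symmetric relabeling is needed. A second, minor point is that $\log f$ is merely Lipschitz and not bounded below on $H$, so the transfer step must be justified through the Lipschitz modulus of $\log f$ rather than any pointwise bound, with integrability guaranteed by the $L^2$-moment estimates for $Z^z_t$ and $\bar Z^{\bar z}_t$.
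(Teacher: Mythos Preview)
Your proposal is correct and follows essentially the same route as the paper: split $P_t\log f(\bar z)=\ee_{\mathbb Q}[\log f(\bar Z^{\bar z}_t)]$ into $\ee_{\mathbb Q}[\log f(Z^z_t)]$ plus a Lipschitz remainder, bound the first by the entropy (Young/Donsker--Varadhan) inequality together with Lemma~\ref{lm-R}, bound the second by Corollary~\ref{cor-x-y}, and then invoke \cite[Theorem 2.1]{BWY19(SPA)} for the consequences. The only cosmetic differences are that the paper names the entropy step ``Young's inequality in \cite[Lemma 2.4]{ATW09(SPA)}'' and cites \cite[Theorem 4.3.9]{LR15} rather than spelling out the Krylov--Bogoliubov argument for existence when $\eta<0$; your explicit remark on the $(z,\bar z)$ relabeling is in fact more careful than the paper's own proof, which silently uses $x,y$ in place of $z,\bar z$.
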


\begin{proof}
It follows from \eqref{pt-y} and \eqref{Q} that 
\begin{align*}
& P_t \log f(y) 
=\ee_{\mathbb Q} [\log f(\bar Z^{\bar z}_t)] \\
& =\ee_{\mathbb Q} [\log f(Z^z_t)]
+\ee_{\mathbb Q} [\log f(\bar Z^{\bar z}_t)-\log f(Z^z_t)] \\ 
& = \ee  [R(t) f(Z^z_t)]
+\ee_{\mathbb Q} [\log f(\bar Z^{\bar z}_t)-\log f(Z^z_t)].
\end{align*}
Using the Young inequality in \cite[Lemma 2.4]{ATW09(SPA)}, we get 
\begin{align*}
P_t \log f(y) 
& \le \log P_t f(x)+\ee [R(t) \log R(t)]
+ \|\nabla \log f\|_\infty \ee_{\mathbb Q} 
\|\bar Z^{\bar z}_t-Z^z_t\|.
\end{align*}
Taking into account \eqref{est-R} and \eqref{x-y}, we obtain \eqref{har}.

The gradient estimate \eqref{est-gra} and asymptotic irreducibility follow  from \cite{BWY19(SPA)}, Theorem 2.1 (1) and (4), respectively.
The asymptotically strong Feller property of $P_t$ is a direct consequence of the gradient estimate \eqref{est-gra} and \cite[Proposition 3.12]{HM06(ANN)}.
Then $P_t$ possesses at most one invariant measure.
To show the ergodicity, it suffices to show the existence of an invariant measure, which is proved in \cite[Theorem 4.3.9]{LR15}, so we complete the proof. 
\end{proof}

\section{Applications to Degenerate Diffusions}
\label{sec4}

In this section, our main aim is to generate the idea and results in Section \ref{sec3} to degenerate diffusions.
Let $(H_i, (\cdot, \cdot)_i, \|\cdot\|_i)$, $i=1,2$, be two separable Hilbert spaces and there are two Gelfand triples $V_i \subset H_i=H_i^* \subset V_i^*$, $i=1,2$. 
Define $H:=H_1 \times H_2$ with inner product
\begin{align} \label{pro-H} 
\Big( \Big(\begin{array}{c} x \\ y \end{array} \Big),
\Big(\begin{array}{c} \bar x \\ \bar y \end{array} \Big) \Big) 
:=(x, y)+(\bar x, \bar y), 
\quad (x, y), (\bar x, \bar y) \in H_1 \times H_2,
\end{align} 
and norm 
\begin{align}  \label{norm-H}
\Big\|\left(\begin{array}{c} x \\ y \end{array}\right) \Big\|
:=\sqrt{\|x\|_1^2+\|y\|_2^2}, \quad (x, y) \in H_1 \times H_2.
\end{align}  
Then $(H, (\cdot, \cdot), \|\cdot\|)$ is a separable Hilbert space.

Our main concern in this section is to consider 
\begin{align}\label{eq-xy}
\begin{split}
dX_t &=b_1(X_t, Y_t) {\rm d}t+\sigma_1 {\rm d}W_t,  \quad X_0=x \in H_1, \\
dY_t &=b_2(X_t, Y_t) {\rm d}t+\sigma_2 (X_t, Y_t) {\rm d}W_t,  
\quad Y_0=y \in H_2,
\end{split}
\end{align}
where $b_i: V \rightarrow V^*_1$, , $i=1,2$, $\sigma_1 \in \LL_2(U; H_1)$, and $\sigma_2: V_2 \rightarrow \LL_2(U; H_2)$, are measurable maps, and $W$ is a $U$-valued cylindrical Wiener process.
Denote by $\|\cdot\|_{\LL_2^i}$ the Hlibert--Schimdt operator norms from $U$ to $H_i$, $i=1,2$, respectively.
 
Eq. \eqref{eq-xy} can be rewritten as Eq. \eqref{eq-z} with initial datum
$Z_0=z=(x, y) \in H_1 \times H_2$, where 
\begin{align} \label{b-s}
Z=\left(\begin{array}{c} X \\ Y \end{array}\right), \quad
b(Z)=\left(\begin{array}{c} b_1(Z) \\ b_2(Z) \end{array}\right), \quad 
\sigma(Z)=\left(\begin{array}{c} \sigma_1 \\ \sigma_2(Z) \end{array}\right).
\end{align} 
It follows from the definition \eqref{norm-H} of the norm in $H$  that 
\begin{align*} 
\|\sigma(u)\|^2_{\LL_2}
& =\sum_{n=1}^\infty \Big\|\Big(\begin{array}{c} \sigma_1 e_k \\ \sigma_2(u) e_k \end{array} \Big) \Big]^2 
=\sum_{n=1}^\infty \|\sigma_1 e_k \|_1^2+\|\sigma_2(u) e_k \|_2^2  \\
& =\|\sigma_1\|^2_{\LL_2^1}+\|\sigma_2(u)\|^2_{\LL_2^2}, \quad u \in H.
\end{align*}  
This shows that, if \eqref{ap-mon}-\eqref{ap-coe} in Assumption \ref{ap} hold with different $\sigma$ and different constants, then Eq. \eqref{eq-xy} with coefficients \eqref{b-s} also satisfies Assumption \ref{ap}. 
Then one use the well-posedness result, Lemma \ref{lm-well} of Eq. \eqref{eq-xy}.

\begin{ap}\label{ap+}
\eqref{ap-mon}-\eqref{ap-gro} hold with $\sigma$, $C_j$, replaced by $\sigma_2$ (the corresponding $\|\cdot\|_{\LL_2}$-norms of $\sigma$ are replaced by $\|\cdot\|_{\LL^2_2}$-norm of $\sigma_2$), and $C_j+\|\sigma_1\|^2_{\LL_2^1}$ for $j=1,3$, respectively.
\end{ap}

Similarly to Assumption \ref{ap-ell}, we just give the following analogous non-degenerate condition on $\sigma_2$. 
The first diffusion $\sigma$ on $H_1$ may taken to be extremely degenerate.
In the examples given in the last section which are frequently used, one can choose $\sigma_1=0$.
 
\begin{ap}\label{ap-ell+}
$\sigma_2: H \rightarrow \LL_2(U; H_2)$ is bounded and invertible with bounded right pseudo-inverse $\sigma_2^{-1}: H \rightarrow \LL(H; U)$ with  $\|\sigma_2^{-1}\|_\infty:=\sup_{z \in H} \|\sigma_2^{-1}(z)\|_{\LL(H; U)}<\infty$.
\end{ap}  

In finite-dimensional case, this model has been intensively investigated, see, e.g., \cite{MSH02(SPA), Zha10(SPA)} for results on well-posedness, derivative formulas, ergodicity, Harnack inequalities, hypercontractivity, and so forth. 
We also note that \cite{Wan17(JFA)} studied Eq. \eqref{eq-xy} with linear $b_1$, semilinear $b_2$, degenerate $\sigma_1=0$, and non-degenerate 
$\sigma_2$ independent of the system.
They applied the coupling method in the semigroup framework to get a power-Harnack inequality and show the hypercontractivity of the Markov semigroup.  

For Eq. \eqref{eq-xy} with coefficients \eqref{b-s} satisfying Assumptions \ref{ap+} and \ref{ap-ell+}, we consider the asymptotic coupling  
\begin{align}  \label{eq-cou-xy} 
\begin{split} 
d \bar X_t &=(b_1(\bar X_t, \bar Y_t)+\lambda (X_t-\bar X_t) ){\rm d}t+\sigma_1 {\rm d}W_t,   \\
d \bar Y_t &=(b_2(\bar X_t, \bar Y_t)
+\sigma_2(\bar X_t, \bar Y_t) \widehat{v}_t ) {\rm d}t
+\sigma_2(\bar X_t, \bar Y_t) {\rm d}W_t,  
\end{split}
\end{align}
with initial datum $(\bar X_0, \bar Y_0)=(\bar x, \bar y) \in H_1 \times H_2$, where  
\begin{align} \label{v+}
\widehat{v}_t:=\lambda \sigma_2^{-1}(X_t, Y_t) (Y_t-\bar Y_t).
\end{align}
Under Assumptions \ref{ap+} and \ref{ap-ell+}, it is not difficult to check that the additional drift term $\sigma_2(\bar X_t, \bar Y_t) \widehat{v}_t$ with $v_t$ given by \eqref{v+} satisfies the hemicontinuity, locally monotonicity and growth condition in \cite{LR10(JFA)}, thus the above asymptotic coupling \eqref{eq-cou-xy} is well-defined.

Similarly to the arguments in Section \ref{sec3}, we set  
\begin{align*}   
\widehat{W}_t:=W_t +\int_0^t\widehat{v}( r) {\rm d}r,
\end{align*}
and define
\begin{align} \label{R+}
\widehat{R}(t): & =\exp\Big( -\int_0^t \<\widehat{v}_r, {\rm d}W_r\>_U
-\frac12 \int_0^t \|\widehat{v}_r\|^2_U {\rm d}r\Big), \quad t \ge 0.
\end{align} 
By using the stopping time technique and the dominated convergence theorem as in Lemma \ref{lm-R}, it is not difficult to show that $\widehat{R}$ defined by \eqref{R+} is a uniformly integrable martingale and that there exists a unique probability measure 
$\widehat{\mathbb Q}$ on 
$(\Omega, \FFF_\infty)$ such that
\begin{align} \label{Q+} 
\frac{{\rm d}\widehat{\mathbb Q}|\FFF_t}{{\rm d} \pp|\FFF_t}=\widehat{R}(t), \quad t \ge 0,
\end{align} 
and $(\widehat{W}_t)_{t \ge 0}$ is a cylindrical Wiener process under $\widehat{\mathbb Q}$.

Rewrite Eq. \eqref{eq-xy} and Eq. \eqref{eq-cou-xy} as   
\begin{align} \label{eq-xy+}
\begin{split}
dX_t &=(b_1(X_t, Y_t)- \sigma_1 v_t) {\rm d}t+\sigma_1 {\rm d}\widehat{W}_t,   \\
dY_t &=(b_2(X_t, Y_t)-\lambda (Y_t-\bar Y_t) ) {\rm d}r
+\sigma_2(X_t, Y_t) {\rm d}\widehat{W}_t,  
\end{split}
\end{align}
and
\begin{align} \label{eq-cou-xy+}
\begin{split}
d \bar X_t &=(b_1(\bar X_t, \bar Y_t)+\lambda (X_t-\bar X_t)
- \sigma_1 v_t ){\rm d}t+\sigma_1 {\rm d}\widehat{W}_t,   \\
d \bar Y_t &=(b_2(\bar X_t, \bar Y_t)) {\rm d}t
+\sigma_2(\bar X_t, \bar Y_t) {\rm d}\widehat{W}_t,  
\end{split}
\end{align}
with initial datum $(X_0, Y_0)=(x, y), (\bar X_0, \bar Y_0)=(\bar x, \bar y) \in H_1 \times H_2$, respectively. 
Denote by $Z_\cdot^z=(X_\cdot^x, Y_\cdot^y)$ and $\bar Z_\cdot^{\bar z}=(\bar X_\cdot^{\bar x}, \bar Y_\cdot^{\bar y})$ denote the solutions of Eq. \eqref{eq-xy+} with $z=(x, y)$ and Eq. \eqref{eq-cou-xy+} with $\bar z=(\bar x, \bar y)$, respectively.
The well-posedness of Eq. \eqref{eq-cou-xy+} implies that   
\begin{align} \label{pt-y+}
P_t f(\bar x, \bar y)=\ee_{\widehat{\mathbb Q}} [f(\bar X_t^{\bar x}, \bar Y_t^{\bar y})],\quad t \ge 0, \ f \in \BB_b(H_1 \times H_2).
\end{align}

We have the following uniform estimate of $\ee[\widehat{R}(t) \log \widehat{R}(t)]$ on any finite interval $[0, T]$ and exponential decay of $\|(X_t^x, Y_t^y)-(\bar X_t^{\bar x}, \bar Y_t^{\bar y})\|$ in the $L^2(\Omega, \widehat{\mathbb Q}; H_1 \times H_2)$-norm sense.
The details could be make rigorous by using stopping time technique, dominated convergence theorem, and Fatou lemma as in Lemma \ref{lm-R}.

\begin{lm} \label{lm-R+}
Under Assumptions \ref{ap+} and \ref{ap-ell+}, we have
\begin{align*} 
&\ee_{\widehat{\mathbb Q}} 
\|(X_t^x, Y_t^y)-(\bar X_t^{\bar x}, \bar Y_t^{\bar y})\|^2 
\le e^{-\gamma t} (\|x-\bar x\|^2+\|y-\bar y\|^2),
\quad t \ge 0,  \\
&\sup_{t \in [0, T]} \ee[\widehat{R}(t) \log \widehat{R}(t)]  
\le \frac{\lambda^2 \|\sigma_2^{-1}\|^2}{2 \gamma} (\|x-\bar x\|^2+\|y-\bar y\|^2),
\quad T>0.  
\end{align*} 
\end{lm}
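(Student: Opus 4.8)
The plan is to mirror the proofs of Lemma~\ref{lm-R} and Corollary~\ref{cor-x-y}, with one crucial structural observation: the degeneracy of the first component causes no difficulty, because the coupling drift $\lambda(X_t-\bar X_t)$ in \eqref{eq-cou-xy+} supplies dissipation in the $H_1$-direction \emph{directly}, without passing through the (possibly vanishing) diffusion $\sigma_1$. First I would localize: for $\|z\|<n$ set $\tau_n=\inf\{t\ge 0:\|(X_t,Y_t)\|\ge n\}$, so that on $[0,T\wedge\tau_n]$ both processes are bounded and, by Assumption~\ref{ap-ell+}, Novikov's condition $\ee\exp(\tfrac12\int_0^{T\wedge\tau_n}\|\widehat v_r\|_U^2\,\mathrm dr)<\infty$ holds. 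Girsanov's theorem then renders $(\widehat W_t)_{t\le T\wedge\tau_n}$ a cylindrical Wiener process under $\widehat{\mathbb Q}_{T,n}:=\widehat R(T\wedge\tau_n)\pp$, and \eqref{eq-xy+}--\eqref{eq-cou-xy+} are the governing equations on this interval.

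The key computation is the It\^o differential of $\|Z_t-\bar Z_t\|^2=\|X_t-\bar X_t\|_1^2+\|Y_t-\bar Y_t\|_2^2$ under $\widehat{\mathbb Q}_{T,n}$. Subtracting \eqref{eq-cou-xy+} from \eqref{eq-xy+}, the common additive noise $\sigma_1\,\mathrm d\widehat W_t$ and the common term $\sigma_1\widehat v_t$ cancel, so the $X$-difference obeys the noise-free equation
\begin{align*}
\mathrm d(X_t-\bar X_t)=\big(b_1(X_t,Y_t)-b_1(\bar X_t,\bar Y_t)-\lambda(X_t-\bar X_t)\big)\,\mathrm dt;
\end{align*}
hence $\|X_t-\bar X_t\|_1^2$ contributes neither an It\^o correction nor a martingale. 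The $Y$-difference retains $(\sigma_2(Z_t)-\sigma_2(\bar Z_t))\,\mathrm d\widehat W_t$, contributing exactly $\|\sigma_2(Z_t)-\sigma_2(\bar Z_t)\|_{\LL_2^2}^2\,\mathrm dt$. Summing the two,
\begin{align*}
\mathrm d\|Z_t-\bar Z_t\|^2
&=\big(2\langle b(Z_t)-b(\bar Z_t),Z_t-\bar Z_t\rangle+\|\sigma_2(Z_t)-\sigma_2(\bar Z_t)\|_{\LL_2^2}^2\big)\,\mathrm dt\\
&\quad -2\lambda\|Z_t-\bar Z_t\|^2\,\mathrm dt+\mathrm d M_t,
\end{align*}
with $M$ a $\widehat{\mathbb Q}_{T,n}$-martingale. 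The monotonicity hypothesis in Assumption~\ref{ap+} (i.e.\ \eqref{ap-mon} with $\sigma$ replaced by $\sigma_2$) bounds the first parenthesis by $\eta\|Z_t-\bar Z_t\|^2$, so taking $\ee_{\widehat{\mathbb Q}_{T,n}}$ yields a differential inequality with rate $-(2\lambda-\eta)=-\gamma$, and Gr\"onwall's lemma gives $\ee_{\widehat{\mathbb Q}_{T,n}}\|Z_t-\bar Z_t\|^2\le e^{-\gamma t}(\|x-\bar x\|^2+\|y-\bar y\|^2)$ on $[0,T\wedge\tau_n]$.

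For the entropy bound I would use the identity $\ee[\widehat R(t\wedge\tau_n)\log\widehat R(t\wedge\tau_n)]=\tfrac12\ee_{\widehat{\mathbb Q}_{T,n}}\int_0^{t\wedge\tau_n}\|\widehat v_r\|_U^2\,\mathrm dr$ together with the pointwise estimate $\|\widehat v_r\|_U^2=\lambda^2\|\sigma_2^{-1}(X_r,Y_r)(Y_r-\bar Y_r)\|_U^2\le\lambda^2\|\sigma_2^{-1}\|_\infty^2\|Z_r-\bar Z_r\|^2$ supplied by Assumption~\ref{ap-ell+}, and then integrate the geometric decay over $[0,\infty)$ to produce the factor $\gamma^{-1}$. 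Passing $n\to\infty$ exactly as in Lemma~\ref{lm-R}—dominated convergence to show $(\widehat R(t))_{t\in[0,T]}$ is a genuine (not merely local) martingale and to identify $\widehat{\mathbb Q}_T:=\widehat R(T)\pp$, then Fatou's lemma to strip the localization from the entropy estimate—delivers both displayed inequalities. The main obstacle is precisely this limiting procedure: one must verify uniform integrability of $\widehat R(t\wedge\tau_n)$, for which the analogue of the uniform bound \eqref{est-Rn} is exactly what is needed to upgrade the local martingale to a uniformly integrable one and to justify interchanging limit and expectation. The a~priori exponential decay is what keeps $\int_0^{t\wedge\tau_n}\|\widehat v_r\|_U^2\,\mathrm dr$ bounded uniformly in both $n$ and $T$, closing the argument.
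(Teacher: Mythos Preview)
Your proposal is correct and follows essentially the same approach as the paper: the paper's proof applies It\^o's formula to $\|Z_t-\bar Z_t\|^2$, obtains exactly your displayed identity with only the $\sigma_2$-difference contributing the It\^o correction, invokes the monotonicity bound \eqref{ap-mon} (with $\sigma$ replaced by $\sigma_2$) to reach the Gr\"onwall inequality, and then bounds the entropy via Assumption~\ref{ap-ell+}. The paper explicitly defers the stopping-time localization and the passage $n\to\infty$ to the argument of Lemma~\ref{lm-R}, which you have spelled out in detail; your emphasis on why $\sigma_1$ disappears from the difference equation is a helpful clarification that the paper leaves implicit.
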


\begin{proof}
Applying It\^o formula to $Z_t-\bar Z_t:=(X_t-\bar X_t, Y_t-\bar Y_t)$, we have
\begin{align*} 
& {\rm d}\|Z_t-\bar Z_t\|^2
=2 \<Z_t-\bar Z_t, (\sigma_2(Z_t)-\sigma_2(\bar Z_t) ) {\rm d}\widehat{W}_t\> \\
& +\int_0^t [\|\sigma_2(Z_t)-\sigma_2(\bar Z_t)\|^2_{\LL_2^2} 
+2\<Z_t-\bar Z_t, b(Z_t)-b(\bar Z_t)
-\lambda (Z_t-\bar Z_t) \> {\rm d}t.
\end{align*}
Taking expectations $\ee_{\widehat{\mathbb Q}}$ on the above equation, using the fact that $({\widehat W}_t)_{t \ge 0}$ is a cylindrical Wiener Process under $\widehat{\mathbb Q}$, and taking into account Assumption \ref{ap-mon} with $\sigma$ and related $\|\cdot\|_{\LL_2}$-norm  replaced by $\sigma_2$ and $\|\cdot\|_{\LL^2_2}$-norm of $\sigma_2$, respectively, we obtain 
\begin{align*} 
\ee_{\widehat{\mathbb Q}} \|Z_t-\bar Z_t\|^2  
\le \|z-\bar z\|^2 - \gamma \int_0^t \ee_{\widehat{\mathbb Q}}  \|Z_r-\bar Z_r\|^2 {\rm d}r, 
\end{align*} 
from which we conclude the first inequality.

On the other hand, it follows from Assumption \ref{ap-ell+} that 
\begin{align*}
& \sup_{t \in [0, T]} \ee[R(t) \log R(t)] 
=\sup_{t \in [0, T]} \ee_{\widehat{\mathbb Q}} [\log R(t)] \\
& = \frac12 \int_0^t \ee_{\widehat{\mathbb Q}} \|v_r\|^2_U {\rm d}r 
\le \frac{\lambda^2 \|\sigma_2^{-1}\|^2}{2 \gamma} \|z-\bar z\|^2.
\end{align*} 
This shows the last inequality and we complete the proof.
\end{proof}

Finally, we derive the following asymptotic log-Harnack inequality and asymptotic properties for Eq. \eqref{eq-xy}.  
The proof is similarly to that of Theorem \ref{tm-har}, so we omit the details.

\begin{tm} \label{tm-har+}
Let Assumptions \ref{ap+} and \ref{ap-ell+} hold.
For any $t \ge 0$, $(x, y), (\bar x, \bar y) \in H_1 \times H_2$, and $f \in \BB^+_b(H_1 \times H_2)$ with 
$\|\nabla \log f\|_\infty<\infty$,
\begin{align}\label{har+}
P_t \log f(\bar x, \bar y) & \le \log P_t f(x, y)
+\frac{\lambda^2 \|\sigma^{-1} \|_\infty^2}{2 \gamma} (\|x-\bar x\|^2+\|y-\bar y\|^2) \nonumber \\
& \quad + e^{-\frac{\gamma t}2}\|\nabla \log f\|_\infty 
\sqrt{\|x-\bar x\|^2+\|y-\bar y\|^2}.
\end{align}
Consequently, similar gradient estimate, asymptotically strong Feller, and asymptotic irreducibility in Theorem \ref{tm-har} holds.
In particular, if $\eta<0$, there exists a unique and thus ergodic invariant probability measure $\mu$ for $P_t$.
\end{tm}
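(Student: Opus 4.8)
The plan is to run the argument of Theorem~\ref{tm-har} in the product setting, with the coupling \eqref{eq-cou} replaced by the degenerate coupling \eqref{eq-cou-xy}, the measure $\mathbb Q$ replaced by $\widehat{\mathbb Q}$ from \eqref{Q+}, the semigroup identity \eqref{pt-y} replaced by \eqref{pt-y+}, and the bounds of Lemma~\ref{lm-R} and Corollary~\ref{cor-x-y} replaced by the two estimates recorded in Lemma~\ref{lm-R+}. Throughout write $z=(x,y)$, $\bar z=(\bar x,\bar y)$, and note that the product norm \eqref{norm-H} gives $\|z-\bar z\|^2=\|x-\bar x\|_1^2+\|y-\bar y\|_2^2$. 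Starting from \eqref{pt-y+} and inserting $\pm\log f(Z^z_t)$, I would write
\begin{align*}
P_t\log f(\bar z)
&=\ee_{\widehat{\mathbb Q}}[\log f(Z^z_t)]
+\ee_{\widehat{\mathbb Q}}\big[\log f(\bar Z^{\bar z}_t)-\log f(Z^z_t)\big].
\end{align*}

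First I would dispose of the entropy term. Since ${\rm d}\widehat{\mathbb Q}|_{\FFF_t}=\widehat R(t)\,{\rm d}\pp|_{\FFF_t}$ with $\ee[\widehat R(t)]=1$, a change of measure gives $\ee_{\widehat{\mathbb Q}}[\log f(Z^z_t)]=\ee[\widehat R(t)\log f(Z^z_t)]$, and the Young-type entropy inequality of \cite[Lemma~2.4]{ATW09(SPA)} yields
\begin{align*}
\ee[\widehat R(t)\log f(Z^z_t)]
&\le\log\ee[f(Z^z_t)]+\ee[\widehat R(t)\log\widehat R(t)]\\
&=\log P_tf(z)+\ee[\widehat R(t)\log\widehat R(t)],
\end{align*}
using $\ee[f(Z^z_t)]=P_tf(z)$. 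The second estimate of Lemma~\ref{lm-R+} bounds $\ee[\widehat R(t)\log\widehat R(t)]$ by $\tfrac{\lambda^2\|\sigma_2^{-1}\|_\infty^2}{2\gamma}\|z-\bar z\|^2$, producing the second term of \eqref{har+}. For the remaining term I would use $\|\nabla\log f\|_\infty<\infty$, Cauchy--Schwarz, and the first estimate of Lemma~\ref{lm-R+}:
\begin{align*}
\ee_{\widehat{\mathbb Q}}\big[\log f(\bar Z^{\bar z}_t)-\log f(Z^z_t)\big]
&\le\|\nabla\log f\|_\infty\big(\ee_{\widehat{\mathbb Q}}\|\bar Z^{\bar z}_t-Z^z_t\|^2\big)^{1/2}\\
&\le e^{-\gamma t/2}\|\nabla\log f\|_\infty\,\|z-\bar z\|.
\end{align*}
Summing the two displays gives \eqref{har+}. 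The gradient estimate and the asymptotically strong Feller property then follow from \cite[Theorem~2.1~(1)]{BWY19(SPA)} together with \cite[Proposition~3.12]{HM06(ANN)}, asymptotic irreducibility from \cite[Theorem~2.1~(4)]{BWY19(SPA)}, and at-most-one invariant measure from \eqref{har+}; when $\eta<0$, existence of an invariant measure via \cite[Theorem~4.3.9]{LR15} promotes this to a unique ergodic $\mu$, exactly as in Theorem~\ref{tm-har}.

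The step that genuinely departs from the non-degenerate case, and which I expect to be the main obstacle, is the contraction estimate packaged in Lemma~\ref{lm-R+}, because the noise $\sigma_1$ on the $X$-component is permitted to degenerate (indeed $\sigma_1=0$ in the intended examples). The Girsanov shift $\widehat v$ of \eqref{v+} therefore acts \emph{only} through the $Y$-component, and the $X$-component is synchronized instead by the purely deterministic drift $\lambda(X_t-\bar X_t)$ in \eqref{eq-cou-xy}. To see that the pair still contracts, I would read off from \eqref{eq-xy+}--\eqref{eq-cou-xy+} that, under $\widehat{\mathbb Q}$, the difference $X_t-\bar X_t$ carries \emph{no} martingale part while $Y_t-\bar Y_t$ carries the noise $(\sigma_2(Z_t)-\sigma_2(\bar Z_t))\,{\rm d}\widehat W_t$; applying It\^o to $\|Z_t-\bar Z_t\|^2$, the two synchronizing drifts contribute $-2\lambda\|Z_t-\bar Z_t\|^2$, and the \emph{joint} monotonicity of Assumption~\ref{ap+} (that is, \eqref{ap-mon} with $\sigma$ replaced by $\sigma_2$, applied to the full drift $b=(b_1,b_2)$) absorbs the remaining drift and the Hilbert--Schmidt term into $\eta\|Z_t-\bar Z_t\|^2$, so Gr\"onwall gives decay at rate $\gamma=2\lambda-\eta$. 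The same $L^2$ decay, combined with $\|\widehat v_r\|_U^2\le\lambda^2\|\sigma_2^{-1}\|_\infty^2\|Y_r-\bar Y_r\|_2^2\le\lambda^2\|\sigma_2^{-1}\|_\infty^2\|Z_r-\bar Z_r\|^2$, controls $\ee[\widehat R(t)\log\widehat R(t)]$ by $\tfrac12\lambda^2\|\sigma_2^{-1}\|_\infty^2\|z-\bar z\|^2\int_0^t e^{-\gamma r}\,{\rm d}r$. The residual difficulty is analytic rather than conceptual: to legitimize the change of measure and these expectations one must run the localization by stopping times $\tau_n$, verify the Novikov condition and the martingale property of $\widehat R$, and pass to the limit by dominated convergence and Fatou, exactly as in Lemma~\ref{lm-R} --- which is precisely the route the text defers to.
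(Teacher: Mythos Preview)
Your proposal is correct and follows exactly the route the paper takes: the paper simply states that the proof is analogous to that of Theorem~\ref{tm-har} and omits the details, relying on the semigroup identity \eqref{pt-y+}, the Young-type entropy inequality from \cite[Lemma~2.4]{ATW09(SPA)}, the two bounds of Lemma~\ref{lm-R+}, and the same references \cite{BWY19(SPA), HM06(ANN), LR15} for the consequences. Your additional explanation of why the degenerate coupling still contracts (the deterministic synchronizing drift on the $X$-component combined with the joint monotonicity of Assumption~\ref{ap+}) is precisely the content of the paper's Lemma~\ref{lm-R+} and its proof, so nothing is missing.
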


\section{Examples}

\subsection{Asymptotic Log-Harnack Inequality for Degenerate SODEs}

For SODE with non-degenerate multiplicative noise, Harnack inequality was shown in \cite{Wan11(AOP)}.
Here we mainly focus on degenerate case in the framework of Section \ref{sec4}.
In finite-dimensional case that $H_1=\rr^n$ and $H_2=\rr^m$  with $n, m \ge 1$, $H=\rr^n \times \rr^m$ and both $V$, $H^*$, and $V^*$ coincide with $H$.
Then $W$ is an $m$-D Brownian motion.

\begin{ex}
Consider Eq. \eqref{eq-xy} in the case $n=m=1$ and 
\begin{align} \label{ex-HM}
b(X,Y)=\left(\begin{array}{c} -X \\ Y-Y^3 \end{array}\right), \quad 
\sigma(X,Y)=\left(\begin{array}{c} 0 \\ 1 \end{array}\right),
\quad X, Y \in \rr^2.
\end{align} 
It was shown in \cite[Example 3.14]{HM06(ANN)} that the corresponding Markov semigroup is not strong Feller.
Therefore, the standard log-Harnack inequality could not be valid. 

For $w=(w_1, w_2) \in \rr^2$, direct calculations yield that 
\begin{align} \label{ex-HM+} 
2\<b(w), w\>+\|\sigma(w)\|^2_{\LL_2}
=-2\|w\|^2-2(w_2^2-1)^2+3
\le -2\|w\|^2+3,
\end{align}
which shows \eqref{ap-coe} with $\eta=-2<0$.
Similarly, one can check other conditions in Assumptions \ref{ap} and \ref{ap-ell}, respectively, Assumptions \ref{ap+} and \ref{ap-ell+}, hold.
Then by Theorem \ref{tm-har+}, the associated Markov semigroup associated with Eq. \eqref{ex-HM} satisfies the asymptotic log-Harnack inequality \eqref{har} and possesses a unique ergodic invariant measure which is asymptotically strong Feller and irreducibility.
\end{ex}

\begin{ex}
Consider Eq. \eqref{eq-xy} driven by additive noise, i.e., $\sigma_2$ does not depend on the system \eqref{eq-xy}, with drift function $b$ satisfying
\begin{align} \label{ex-MSH}
\<b(w), w\> \le \alpha-\beta \|w\|^2, \quad w\in \rr^{n+m},
\end{align} 
for some constants $\alpha, \beta>0$.
This includes Eq. \eqref{eq-xy} with $b_i$ and $\sigma_i$ given by \eqref{ex-HM}.
So the strong Feller property fails and power- or log-Harnack inequality could not hold.
We note that the author in \cite[Theorem 4.4]{MSH02(SPA)} proved that the corresponding Markov semigroup possesses a unique ergodic invariant measure, provided the above dissipative condition holds for $b \in \CC^\infty(\rr^{m+n})$ and centain Lyapunov structure holds for Eq. \eqref{eq-xy}.

Using Theorem \ref{tm-har+}, we have that the associated Markov semigroup satisfies the asymptotic log-Harnack inequality \eqref{har} and possesses a unique ergodic invariant measure which is asymptotically strong Feller and irreducibility. 
We do not possess differentiable regularity on $b$ besides the monotone, coercive, and growth conditions in Assumption \ref{ap}. 
\end{ex}

\subsection{Asymptotic Log-Harnack Inequality for SPDEs}

In this part, we give several examples of SPDEs such that the main result, Theorems \ref{tm-har} and \ref{tm-har+}, can be applied.

Let $q \ge 2$, $\OOO$ be a bounded open subset of $\rr^d$, and $L^q$, $W_0^{1,q}$, and $W_0^{-1,q^*}$ be the usual Lebesgue and Sobolev spaces on $\OOO$, where $q^*=q/(q-1)$.
Then we have two Gelfand triples 
$W_0^{1,q} \subset L^2 \subset W_0^{-1,q^*}$ and $L^q \subset W_0^{-1,2} \subset L^{q^*}$.
Let $U=L^2$ and $W$ be an $L^2$-valued cylindrical Wiener process.

\subsubsection{Nondegenerate case}

In this part, we impose Assumption \ref{ap-ell} and the following Lipschitz continuity and linear growth conditions on $\sigma$: there exists a positive constant $L_\sigma$ such that for all $u, v, w \in L^2$,
\begin{align} \label{lip}
\|\sigma(u)-\sigma(v)\|^2_{\LL_2} \le L^2_\sigma \|u-v\|^2, \quad 
\|\sigma(w)\|^2_{\LL_2} 
\le L^2_\sigma(1+ \|w\|^2).  
\end{align} 

\begin{ex} 
Take $H=L^2$ and $V=W_0^{1,q}$. 
Consider the stochastic generalized $p$-Laplacian equation
\begin{align}\label{p-lap}
{\rm d}Z_t={\rm div}(|\nabla Z_t|^{q-2} \nabla Z_t- c |Z_t|^{\tilde{q}-2} Z_t) {\rm d}t+\sigma (Z_t) {\rm d}W_t, 
\end{align}
with $Z_0=z \in L^2$ and homogeneous Dirichlet boundary condition,
where $c \ge 0$ and $\tilde{q} \in [1, q]$.
Then Eq. \eqref{p-lap} is equivalent to Eq. \eqref{eq-z} with $b$ given by 
$b(u)={\rm div}(|\nabla u|^{q-2} \nabla u)- c |u|^{\tilde{q}-2} u$,
$u \in W_0^{1,q}$.
In this case, one can check that \ref{ap-con}--\ref{ap-gro} hold with $\sigma=0$; see, e.g., \cite[Examples 4.1.5 and 4.1.9]{LR15}.
Under the condition \eqref{lip}, we obtain Assumption \ref{ap} with 
$\alpha=q$ and $\eta=L^2_\sigma$.

We note that, for Eq. \eqref{p-lap} driven by non-degenerate additive noise, \cite[Theorem 1.1 and Example 3.3]{Liu09(JEE)} got a power-Harnack inequality to the related Markov semigroup $P_t$.
Whether $P_t$ satisfies power- or log-Harnack inequality in the multiplicative noise case is unknown.
Under Assumption \ref{ap-ell}, we use Theorem \ref{tm-har} to derive the asymptotic log-Harnack \eqref{har} with certain constants for $P_t$.

\end{ex}

\begin{ex} 
Take $H=W_0^{-1,2}$ and $V=L^q$. 
Consider the stochastic generalized porous media equation
\begin{align}\label{p-med}
{\rm d}Z_t=( L \Psi(Z_t) +\Phi(Z_t)){\rm d}t+\sigma (Z_t) {\rm d}W_t, 
\end{align}
with $Z_0=z \in W_0^{-1,2}$ and homogeneous Dirichlet boundary condition. 
Here $L$ is a negative definite self-adjoint linear operator in $L^2$ such that its inverse is bounded in $L^q$ (including the Dirichlet Laplacian operator), 
$\Psi, \Phi$ are Nemytskii operators related to functions $\psi, \phi: \rr \rightarrow \rr$, respectively, such that the following monotonicity and growth conditions hold for some constants $C_+>0$, $C, \eta \in \rr$: 
\begin{align*}
&|\psi(t)|+|\phi(t)-C t| \le C_+(1+|t|^{q-1}),  \\
& -2\<\Psi(u)-\Psi(v), u-v\>+2 \<\Phi(u)-\Phi(v), (-L)^{-1}(u-v)\> \\
& \le -C_+ \|u-v\|^q_{L^q}+\eta \|u-v\|^2,
\quad t \in \rr, \ u, v \in L^q.
\end{align*}

A very simple example satisfying the above two inequalities is give by 
$\psi(t)=|t|^{q-2} t$ and $\phi(t)=\eta t$, $t \in \rr$.
Then one can check that $b$ defined by 
$b(u)=L \Psi(u) +\Phi(u)$, $u \in W_0^{-1,2}$,
satisfies Assumption \ref{ap} with $\sigma=0$; see, e.g., \cite[Theorem A.2]{Wan07(AOP)}.
According to \eqref{lip}, we obtain Assumption \ref{ap} with 
$\alpha=q$ and $\eta=L^2_\sigma$.

We note that in the non-degenerate additive noise case, \cite[Theorem 1.1]{Wan07(AOP)} got a power-Harnack inequality to the related Markov semigroup $P_t$.
Whether power- or log-Harnack inequality holds for $P_t$ in 
the multiplicative noise case remains open.
Using Theorem \ref{tm-har}, under the usual non-degenerate Assumption \ref{ap-ell}, we conclude that the asymptotic log-Harnack \eqref{har} holds with certain constants for $P_t$.
\end{ex}

\subsubsection{Degenerate case}

Next we give an SPDE with degenerate multiplicative noise.

\begin{ex}
Let $\OOO=(0,1)$, $U=H_1=H_2=L^2$, and $W$ be an $L^2$-valued cylindrical Wiener process. 
Consider Eq. \eqref{eq-xy} with 
\begin{align} \label{ex-Hai}
b(X,Y)=\left(\begin{array}{c} \Delta X+Y+2X-X^3 \\ 
\Delta Y+X+2Y-Y^3\end{array}\right),
\quad (X,Y) \in L^2 \times L^2.
\end{align} 
We note that, the author in \cite[Theorem 6.2]{Hai02(PTRF)} showed that Eq. \eqref{eq-xy} with drift \eqref{ex-Hai} and diffusion given by 
\begin{align*}
\sigma(X,Y)=\left(\begin{array}{c} 0 \\ 
{\rm Id}_{L^2} \end{array}\right),
\quad (X,Y) \in L^2 \times L^2,
\end{align*} 
where ${\rm Id}_{L^2}$ denotes the identity operator on $L^2$, possesses a unique ergodic invariant measure, by using asymptotic coupling method in combination with the Lyapunov structure of this system. 

One can check that Assumption \ref{ap+} hold on $b$ given in \eqref{ex-Hai} and $\sigma$ given in \eqref{b-s} such that $\sigma \in \LL_2(U; H_1)$ and $\sigma_2$ satisfies \eqref{lip} with $\sigma$ and related $\|\cdot\|_{\LL_2}$-norm replaced by $\sigma_2$ and related $\|\cdot\|_{\LL^2_2}$-norm, respectively.
Then using Theorem \ref{tm-har+}, we show that the associated Markov semigroup satisfies the asymptotic log-Harnack inequality \eqref{har} and possesses a unique ergodic invariant measure.
\end{ex}


\bibliographystyle{amsalpha}
\bibliography{bib}

\end{document}